\documentclass[12pt]{amsart}
\usepackage[pagewise]{lineno}
\usepackage{amscd,amssymb}
\usepackage[arrow,matrix]{xy}
\usepackage[urlcolor=blue,linkcolor=red,colorlinks=true]{hyperref}
\usepackage{mathdots}
\usepackage{mathrsfs}
\usepackage{amsthm}
\usepackage{bm}
\usepackage{amscd}

\topmargin=0.1in
\textwidth=5.95in
\textheight=8.60in
\oddsidemargin=0.3in
\evensidemargin=0.3in

\theoremstyle{plain}
\newtheorem{thm}[subsection]{Theorem}
\newtheorem{lem}[subsection]{Lemma}
\newtheorem{prop}[subsection]{Proposition}
\newtheorem{cor}[subsection]{Corollary}

\theoremstyle{definition}
\newtheorem{rk}[subsection]{Remark}

\newcommand{\bb}{\mathbb}

\newcommand{\p}{\partial}

\newcommand{\e}{{\bf e}}

\begin{document}
	\date{}
		
	\title[On homogeneous polynomials determined by partial derivatives]{On homogeneous polynomials determined by their partial derivatives}
		
	\author[ZHENJIAN WANG]{ ZHENJIAN WANG  }
	\address{YMSC, Tsinghua University, 100084 Beijing, China}
	\email{wzhj@mail.tsinghua.edu.cn}
		
	\subjclass[2010]{Primary 14A25, Secondary 14J70, 13F20 }
		
	\keywords{homogeneous polynomials, derivatives}
		
	\begin{abstract}
	We prove that a generic homogeneous polynomial of degree $d$ is determined, up to a nonzero constant multiplicative factor, by the vector space spanned by its partial derivatives of order $k$ whenever $k\leq\frac{d}{2}-1$.
	\end{abstract}
	\maketitle
	
\section{Introduction}
 We investigate in this note the reconstructibility of a homogeneous polynomial from its partial derivatives. The study can date back to J.~Carlson and Ph.~Griffiths, who in \cite{CG} showed that a generic homogeneous polynomial could be reconstructed, up to a nonzero constant multiple, from its Jacobian ideal, or equivalently, from its first order partial derivatives; in that paper, they used this result to study variation of Hodge structures and proved the global Torelli theorem for hypersurfaces. For further developments of the determination of a homogeneous polynomial by its Jacobian ideal, see \cite{ZW} and references therein.

 In the classical theory of variation of Hodge structures for smooth hypersurfaces, as in \cite{CG}, only first order derivatives of the defining homogeneous polynomials are involved. We can also construct higher order versions of this classical theory; see for instance \cite{DGI}. In this higher order analogous theory, a problem arises concerning the reconstructibility of a homogeneous polynomial from its higher order partial derivatives. In this paper, we will solve this problem and prove that a generic homogeneous polynomial has the desired property.

Let $S=\bb{C}[x_0,x_1,\cdots,x_n]$ be the graded polynomial ring in $n+1$ variables with coefficients in $\bb{C}$
$$
S=\bigoplus_{d=0}^\infty S_{n,d},
$$
where $S_{n,d}$ is the vector space of homogeneous polynomials of degree $d$. Given $f\in S_{n,d}$ and a natural number $k\geq 0$. Denote by $J_k(f)$ the graded ideal of $S$ generated by all partial derivatives of $f$ of order $k$ and by $E_k(f)$ the degree $d-k$ homogeneous component of $J_k(f)$, that is, the vector space spanned by all $k$-th order partial derivatives of $f$.
We will prove the following theorem.

\begin{thm}\label{thm2}
Given $n\geq1$ and $d\geq 3$, and $k\geq 1$ a natural number such that $k\leq \frac{d}{2}-1$. Suppose $f$ is a generic  homogeneous polynomial in $S_{n,d}$.

Let $g$ be another homogeneous polynomial in $S_{n,d}$ such that $E_k(f)=E_k(g)$, then $g\in\bb{C}^*f$.
\end{thm}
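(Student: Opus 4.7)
The plan is to use the differentiation (catalecticant) maps
\[
\phi_f^j\colon T_j:=\bb{C}[y_0,\dots,y_n]_j \longrightarrow S_{n,d-j},\qquad P\longmapsto P(\p)f,
\]
where $P(\p)$ means $P$ applied as a differential operator (with each $y_i$ replaced by $\p/\p x_i$), so that $\mathrm{Im}(\phi_f^j)=E_j(f)$. The hypothesis $k\le d/2-1$ is equivalent to $k+1\le d/2$, which makes $\dim T_{k+1}\le\dim S_{n,d-k-1}$. A standard semicontinuity argument (witnessed, say, by a sufficiently general sum of $d$-th powers of linear forms) shows that the locus where both $\phi_f^k$ and $\phi_f^{k+1}$ are injective is a non-empty Zariski open subset of $S_{n,d}$, so I would take $f$ generic in this sense. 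From $E_k(f)=E_k(g)$ and the identity $E_{k+1}=\sum_i\p_i E_k$, also $E_{k+1}(f)=E_{k+1}(g)$; matching dimensions forces $\phi_g^k$ and $\phi_g^{k+1}$ to be injective as well, and there exist invertible linear maps $A\colon T_k\to T_k$ and $B\colon T_{k+1}\to T_{k+1}$ with $\phi_g^k=\phi_f^k\circ A$ and $\phi_g^{k+1}=\phi_f^{k+1}\circ B$.

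The heart of the argument is an integrability condition on $A$. Comparing
\[
\phi_g^{k+1}(y_iP)=\p_i\phi_g^k(P)=\phi_f^{k+1}(y_iA(P))=\phi_f^{k+1}(B(y_iP))
\]
and using the injectivity of $\phi_f^{k+1}$ yields $B(y_iP)=y_iA(P)$. Via the Koszul syzygies of $(y_0,\dots,y_n)$ (the kernel of $T_1\otimes T_k\to T_{k+1}$), this well-definedness translates into
\[
y_i A(y_jQ)=y_j A(y_iQ) \qquad \text{for all } Q\in T_{k-1},\; i,j.
\]
With this in hand, I would extend $A$ inductively to $\tilde A_j\colon T_j\to T_j$ for every $j\ge k$ by $\tilde A_k=A$ and $\tilde A_{j+1}(y_iR):=y_i\tilde A_j(R)$; well-definedness at each new stage follows automatically from $y_iy_j=y_jy_i$, so only the first step uses the integrability. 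In particular $\tilde A_{2k}$ is well-defined on $T_{2k}$ and satisfies $\tilde A_{2k}(y^\beta R)=y^\beta A(R)$ for $|\beta|=k,\;R\in T_k$. Writing the same monomial $PQ=QP\in T_{2k}$ in the two obvious ways then forces, by linearity,
\[
PA(Q)=QA(P)\qquad \text{for all } P,Q\in T_k.
\]

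Finally, to deduce that $A$ is scalar, set $R:=A(y_0^k)\in T_k$, so that $y_0^kA(P)=PR$ for every $P\in T_k$. Taking $P=y_j^k$ with $j\ne 0$ gives $y_0^k\mid y_j^kR$ in $\bb{C}[y_0,\dots,y_n]$; since $\gcd(y_0^k,y_j^k)=1$, we get $y_0^k\mid R$, and because $R\in T_k$ this forces $R=\lambda y_0^k$ for some $\lambda\in\bb{C}$. Therefore $A=\lambda\cdot\Id$, so $\phi_g^k=\lambda\phi_f^k$, i.e.\ $P(\p)(g-\lambda f)=0$ for every $P\in T_k$; as $g-\lambda f$ is homogeneous of degree $d>k$ with all $k$-th partials vanishing, it must be zero, and $g=\lambda f\in\bb{C}^*f$. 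The main obstacle I expect is the middle step---establishing the integrability condition and then its Koszul-based extension up to degree $2k$; this is exactly where the bound $k\le d/2-1$ becomes essential, since it is what guarantees the injectivity of $\phi_f^{k+1}$ from which the integrability flows.
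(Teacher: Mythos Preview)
Your argument is correct and complete: the integrability $y_iA(y_jQ)=y_jA(y_iQ)$ follows from the injectivity of $\phi_f^{k+1}$ exactly as you say, the extension $\tilde A_j$ is well-defined because the Koszul generators of $\ker(T_1\otimes T_j\to T_{j+1})$ map to zero once $j\ge k+1$, and the UFD step forcing $A=\lambda\,\Id$ is clean. The paper, however, proceeds differently after obtaining the same integrability (its equation $a_{I,I'}=a_{I-\e_p+\e_q,\,I'-\e_p+\e_q}$ is precisely your relation in matrix coordinates). Rather than climbing \emph{up} to degree $2k$ and arguing that $A$ is scalar, the paper climbs \emph{down} one step: it applies Euler's formula to each $D_Kg$ with $|K|=k-1$, uses the integrability to regroup the right-hand side, and recognises the Euler expansion of $D_{K'}f$, yielding $E_{k-1}(g)\subseteq E_{k-1}(f)$; equality of dimensions then gives $E_{k-1}(g)=E_{k-1}(f)$, and an induction down to $k=0$ finishes. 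Your route avoids the repeated Euler step and the induction on $k$, and packages the conclusion as ``the transition operator is scalar''; it is arguably more conceptual and would port to any setting where the Koszul resolution of the variables and the UFD property are available. The paper's route, on the other hand, yields the intermediate information $E_r(g)=E_r(f)$ for every $0\le r\le k$ along the way and keeps everything in terms of explicit coefficient identities, which is what the paper later reuses. Both approaches invoke exactly the same genericity, namely $\dim E_{k+1}(f)=\dim S_{n,k+1}$, so the bound $k\le d/2-1$ plays the same role in each.
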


The underlying idea in the proof is very simple, so we give an outline here. We will show that $E_{k-1}(g)=E_{k-1}(f)$, then apply induction on $k$ to obtain $E_r(g)=E_r(g)$ for all $0\leq r\leq k$. Since $E_0(f)$ is essentially nothing but $\bb{C}f$, the conclusion follows immediately.

Note that we already have a more precise result, Theorem 1.1 in \cite{ZW}, for the case $k=1$. But we do not need to use it to prove Theorem \ref{thm2}; instead, we will use induction on $k$ until the case $k=0$ is reached. In addition, the restriction $k\leq\frac{d}{2}-1$ is given in order to ensure that $\dim E_{k+1}(f)=\dim S_{n,k+1}$ for a generic $f$, see Lemma \ref{lem1} below.

\bigskip
We almost proved the above theorem when we wrote the paper \cite{ZW}, but we did not write down the complete proof due to the lack of adequate knowledge of its applications. We would like to thank Professor A. Dimca for his kindly pointing out the applications to the study of higher Jacobians, associated polar maps, and so on. We thank the referee for valuable comments.
We also thank Yau Mathematical Sciences Center for their financial support and wonderful working atmosphere.

\section*{Notations}
As in the introduction, $S_{n,d}$ denotes the vector space of homogeneous polynomials of degree $d$.

The multi-index set
$$
\bb{N}^{n+1}=\{(i_0,i_1,\cdots, i_n)\,:\,i_j\geq 0\ \text{for }  j=0,1,\cdots, n\}.
$$
We denote by $I$ an element of $\bb{N}^{n+1}$. We shall see $\bb{N}^{n+1}$ as a subset of the vector space $\bb{R}^{n+1}$; among operations on $\bb{N}^{n+1}$ are addition, subtraction and multiplication by a positive integer:
$$
I\pm I'=(i_0\pm i_0',i_1\pm i_1',\cdots, i_n\pm i_n')
$$
and
$$
mI=(mi_0,\cdots,mi_n)
$$
for $I=(i_0,\cdots, i_n), I'=(i_0',\cdots, i_n')$, and $m\in\bb{N}$.

Denote by $\e_j,j=0,\cdots, n$ the canonical basis of $\bb{R}^{n+1}$:
$$
\e_j=(0,\cdots, 0,1,0,\cdots,0),
$$
where 1 lies in the $j$-th entry. Using this basis, we may write $I=(i_0,\cdots, i_n)$ as $I=\sum_{j=0}^ni_j\e_j$.

Moreover, there is an obvious partial ordering $"\geq"$ on $\bb{N}^{n+1}$, with
$$
I=(i_0,i_1,\cdots,i_n)\geq I'=(i_0',\cdots,i_n')\Leftrightarrow i_j\geq i_j',\quad j=0,\cdots, n,
$$
or more concisely,
$$
I\geq I'\Leftrightarrow I-I'\in\bb{N}^{n+1}.
$$

The order of $I=(i_0,\cdots, i_n)$:
$$
|I|=i_0+\cdots+i_n.
$$

For $f\in S_{n,d}$, the partial derivative of $f$ of type $I$ is
$$
D_If=\frac{\p^{|I|}f}{\p x_0^{i_0}\p x_1^{i_1}\cdots\p x_n^{i_n}}.
$$

By definition, $E_k(f)$ is the vector subspace of $S_{n,d-k}$ spanned by $D_If, |I|=k$; thus we have
$$
E_k(f)=\langle D_If\,:\,|I|=k\rangle.
$$

\section{Polynomials determined by higher order derivatives}
In this section, we will give the proof of Theorem \ref{thm2}.

We begin our proof with the following lemma.

\begin{lem}\label{lem2}
Let $f\in S_{n,d}$. If $k\geq 1$ and $\dim E_k(f)=\dim S_{n,k}$, then $\dim E_{k-1}(f)=\dim S_{n,k-1}$.
\end{lem}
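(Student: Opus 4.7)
The plan is to prove the contrapositive in a direct way: assume there exists a nontrivial linear relation among the $(k-1)$-st partial derivatives of $f$, and produce one among the $k$-th partial derivatives, contradicting the hypothesis $\dim E_k(f) = \dim S_{n,k}$. Note that the hypothesis is exactly saying that the family $\{D_I f : |I| = k\}$, of cardinality $\dim S_{n,k} = \binom{n+k}{k}$, is linearly independent in $S_{n,d-k}$; the conclusion we want is the analogous linear independence of $\{D_J f : |J| = k-1\}$.

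So suppose $\sum_{|J| = k-1} c_J D_J f = 0$ with not all $c_J$ zero. The key step is to apply the differential operators $\partial/\partial x_i$, for $i = 0, 1, \ldots, n$, to this relation. Each one yields
\begin{equation*}
\sum_{|J| = k-1} c_J D_{J + \e_i} f = 0.
\end{equation*}
Reindexing by $I = J + \e_i$, which has $|I| = k$ and forces the $i$-th entry of $I$ to satisfy $i_i \geq 1$, this becomes a linear relation among a subset of the $k$-th partial derivatives of $f$. By the hypothesis, every coefficient must vanish, giving $c_{I - \e_i} = 0$ whenever $I$ has $i_i \geq 1$.

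The final step is to deduce that every $c_J$ vanishes, which will be the contradiction. Given any $J$ with $|J| = k-1$, simply pick any index $i$ (for instance, $i = 0$) and set $I = J + \e_i$; then $|I| = k$ with $i_i \geq 1$, and the previous step forces $c_J = c_{I - \e_i} = 0$. Since $J$ was arbitrary, this contradicts the choice of a nontrivial relation, completing the argument.

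I do not anticipate a real obstacle here; the only mild bookkeeping is the index translation $J \leftrightarrow I - \e_i$, and one should keep in mind that the hypothesis $\dim E_k(f) = \dim S_{n,k}$ implicitly requires $d \geq 2k$, which in turn ensures that $D_J f$ for $|J| = k-1$ is a polynomial of positive degree so that differentiation makes sense. Once the linear independence of the $(k-1)$-st derivatives is established, the equality $\dim E_{k-1}(f) = \binom{n+k-1}{k-1} = \dim S_{n,k-1}$ is automatic.
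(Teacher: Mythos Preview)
Your argument is correct and is essentially the paper's own proof: assume a linear relation among the $(k-1)$-st derivatives, differentiate once, and use the assumed linear independence of the $k$-th derivatives to conclude all coefficients vanish. The only difference is cosmetic: the paper differentiates only with respect to $x_0$, whereas you differentiate with respect to every $x_i$ and then observe (correctly) that the case $i=0$ already suffices---so your extra differentiations are redundant but harmless.
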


\begin{proof}
The proof is almost obvious: if we are given a linear relation
$$
\sum_{|I|=k-1}a_ID_If=0,
$$
by taking differentiation with respect to the variable $x_0$, it follows that
$$
\sum_{|I|=k-1}a_ID_{I+\e_0}f=0.
$$
On the other hand, the assumption of $E_k(f)$ implies that $\{D_{I+\e_0}f\,:\,|I|=k-1\}$ are linearly independent, so $a_I=0$ for all $I$.
\end{proof}

An induction on $k$ gives the following corollary.

\begin{cor}\label{cor1}
Let $f\in S_{n,d}$. If $k\geq 1$ and $\dim E_k(f)=\dim S_{n,k}$, then $\dim E_r(f)=\dim S_{n,r}$ for all $0\leq r\leq k$.
\end{cor}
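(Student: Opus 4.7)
The plan is to apply Lemma~\ref{lem2} iteratively, running a downward induction on the order of differentiation. Concretely, I would set up the statement to be proved by induction as: for every integer $r$ with $0\leq r\leq k$, one has $\dim E_r(f)=\dim S_{n,r}$. The hypothesis of the corollary gives the base case $r=k$ directly.

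For the inductive step, suppose that $\dim E_r(f)=\dim S_{n,r}$ has been established for some $r$ with $1\leq r\leq k$. Then the assumption of Lemma~\ref{lem2}, with $k$ there replaced by $r$, is satisfied, and the lemma immediately yields $\dim E_{r-1}(f)=\dim S_{n,r-1}$. Descending from $r=k$ down to $r=1$ in this way produces the desired equality at every intermediate level, and in particular at $r=0$, completing the induction.

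There is essentially no obstacle: once Lemma~\ref{lem2} is in hand the corollary is purely formal, because the lemma is exactly the single-step version of the statement. The only thing to be slightly careful about is the indexing, namely that the induction is downward (from $k$ towards $0$) rather than upward, and that each application of Lemma~\ref{lem2} is legal because $r\geq 1$ throughout the induction. The case $r=0$ itself is not obtained by a further application of the lemma but as the terminal output of the step with $r=1$.
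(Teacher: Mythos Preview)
Your proposal is correct and matches the paper's own justification, which simply states that an induction on $k$ using Lemma~\ref{lem2} gives the corollary. Your downward induction from $r=k$ to $r=0$, applying Lemma~\ref{lem2} at each step, is exactly this argument spelled out in detail.
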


As a second step to the proof of Theorem \ref{thm2}, we show the following proposition.

\begin{prop}\label{prop1}
Given $n\geq 1$, $d\geq 3$, and $k\geq 1$. Let $f,g\in S_{n,d}$ be such that $E_k(g)=E_k(f)$ and $\dim E_{k+1}(f)=\dim S_{n,k+1}$, then $E_{k-1}(g)=E_{k-1}(f)$.
\end{prop}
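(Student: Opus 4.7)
The plan is to compare the basis $\{D_Ig\}_{|I|=k}$ of $E_k(g)=E_k(f)$ with the basis $\{D_{I'}f\}_{|I'|=k}$, to pin down the change-of-basis matrix between them using compatibility of mixed partial derivatives, and to show that it must be a nonzero scalar multiple of the identity; a single integration step then yields $E_{k-1}(f)=E_{k-1}(g)$.

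First I would collect the necessary dimension facts. By Lemma \ref{lem2} applied iteratively to $f$, the hypothesis $\dim E_{k+1}(f)=\dim S_{n,k+1}$ propagates down to $\dim E_r(f)=\dim S_{n,r}$ for $0\le r\le k+1$. Since $E_{k+1}(g)=\sum_{j}\partial_j E_k(g)=\sum_{j}\partial_j E_k(f)=E_{k+1}(f)$, one gets $\dim E_{k+1}(g)=\dim S_{n,k+1}$ as well, and hence the analogous statements for $g$. In particular $\{D_Ig\}_{|I|=k}$ and $\{D_{I'}f\}_{|I'|=k}$ are both bases of $E_k(f)=E_k(g)$, and the $(k+1)$-th order derivatives $\{D_Mf\}_{|M|=k+1}$ are linearly independent.

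Next I would write $D_Ig=\sum_{|I'|=k}A_{I,I'}D_{I'}f$ for the unique invertible matrix $A$. For each $|J|=k-1$ and pair of indices $j\ne j'$, the equality $\partial_{j'}\partial_j D_J g=\partial_j\partial_{j'} D_J g$ becomes
$$\sum_{|I'|=k}A_{J+\e_j,I'}\,D_{I'+\e_{j'}}f=\sum_{|I'|=k}A_{J+\e_{j'},I'}\,D_{I'+\e_j}f,$$
and comparing coefficients of $D_M f$ for each $|M|=k+1$ (using linear independence) produces two families of constraints on $A$: a vanishing condition $A_{I,L}=0$ whenever some index $j$ satisfies $I_j\ge 1$ and $L_j=0$; and a shift invariance $A_{I,L}=A_{I-\e_j+\e_{j'},\,L-\e_j+\e_{j'}}$ whenever $I_j,L_j\ge 1$ and $j\ne j'$.

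To finish, given $I\ne L$ with $|I|=|L|=k$ pick some $j$ with $I_j>L_j$ and iterate the shift in coordinate $j$ exactly $L_j$ times; the inequality $I_j>L_j$ is preserved at each step, so the path stays in the nonnegative orthant, and after $L_j$ steps the $j$-th entry of the shifted $L$ is zero while that of the shifted $I$ is still positive, whence the vanishing condition gives $A_{I,L}=0$. The shift invariance applied to diagonal pairs shows that $A_{I,I}$ is the same constant $\lambda$ for every $|I|=k$, and invertibility of $A$ forces $\lambda\ne 0$. Hence $D_Ig=\lambda D_I f$ for all $|I|=k$, so for any $|J|=k-1$ every partial derivative of $D_Jg-\lambda D_J f$ vanishes; since the hypothesis forces $d\ge k+1$ this polynomial has positive degree $d-k+1\ge 2$, so it is identically zero, giving $D_Jg=\lambda D_J f$ and therefore $E_{k-1}(g)=E_{k-1}(f)$. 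The main obstacle is the combinatorial bookkeeping in the coefficient-matching step and checking that the shift operations can always be carried out inside the simplex of degree-$k$ multi-indices; the surrounding algebra is a routine linear-independence argument followed by a one-step integration.
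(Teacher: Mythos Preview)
Your argument is correct and in fact proves more than the proposition asks: once you show the change-of-basis matrix $A$ is $\lambda\cdot\mathrm{Id}$ with $\lambda\ne 0$, you get $D_Ig=\lambda D_If$ for every $|I|=k$, and a single integration gives $D_Jg=\lambda D_Jf$ for all $|J|=k-1$ (indeed, iterating would yield $g=\lambda f$ outright, making the downward induction in Theorem~\ref{thm2} superfluous). The paper takes a different second step. It derives the same shift relation $a_{I,I'}=a_{I-\e_p+\e_q,\,I'-\e_p+\e_q}$ from linear independence of the $(k+1)$-st derivatives of $f$, but does not push on to conclude that $A$ is scalar. Instead it applies the Euler identity $(d-k+1)D_Kg=\sum_p x_pD_{K+\e_p}g$ for each $|K|=k-1$, substitutes $D_{K+\e_p}g=\sum_{I'}a_{K+\e_p,I'}D_{I'}f$, and uses the shift relation (together with the vanishing $a_{K+\e_p,I'}=0$ when $I'\not\ge\e_p$) to collapse the double sum to $\sum_{|K'|=k-1}a_{K+\e_0,K'+\e_0}\sum_p x_pD_{K'+\e_p}f$; a second use of Euler then yields $D_Kg=\sum_{K'}a_{K+\e_0,K'+\e_0}D_{K'}f\in E_{k-1}(f)$, and equality of the two $E_{k-1}$'s follows from Corollary~\ref{cor1}. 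Your combinatorial walk on the degree-$k$ simplex buys the sharper conclusion; the paper's Euler trick avoids that walk at the price of obtaining only the containment $E_{k-1}(g)\subseteq E_{k-1}(f)$ and hence relying on the induction down to $E_0$.
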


\begin{proof}
We will show $E_{k-1}(g)\subseteq E_{k-1}(f)$. This is sufficient for our purpose because the two vector spaces have the same dimension by Corollary \ref{cor1}.

From $E_k(g)=E_k(f)$, we have a system of linear relations as follows: for all $I\in\bb{N}^{n+1}$ such that $|I|=k$, we have
\begin{equation}\label{eq2}
D_Ig=\sum_{|I'|=k}a_{I,I'}D_{I'}f.
\end{equation}
for some $a_{I,I'}\in\bb{C}$.

Our discussions in the sequel will be divided into two steps.

{\bf Step 1: Differentiating equations. }
Fix $I$ and $0\leq p\leq n$ such that $I\geq \e_p$. For any $0\leq q\leq n$, we will apply the equality $D_{\e_q}D_Ig=D_{\e_p}D_{(I-\e_p)+\e_q}g$ to equation \eqref{eq2}; to this end, we obtain first
\begin{eqnarray*}
D_{\e_q}D_Ig&=&D_{\e_q}\biggl(\sum_{|I'|=k}a_{I,I'}D_{I'}f\biggr)\\
            &=&\sum_{|I'|=k}a_{I,I'}D_{I'+\e_q}f;\\
\end{eqnarray*}
second,
\begin{eqnarray*}
D_{\e_p}D_{(I-\e_p)+\e_q}g&=&D_{\e_p}\biggl(\sum_{|I'|=k}a_{(I-\e_p)+\e_q,I'}D_{I'}f\biggr)\\
            &=&\sum_{|I'|=k}a_{I-\e_p+\e_q,I'}D_{I'+\e_p}f.
\end{eqnarray*}
From our assumption $\dim E_{k+1}(f)=\dim S_{n,k+1}$, it follows that $\{D_Jf\,:\, |J|=k+1\}$ are linearly independent. Therefore, using $D_{\e_q}D_Ig=D_{\e_p}D_{(I-\e_p)+\e_q}g$ and comparing the coefficients of each term $D_Jf$, we obtain that
$$
a_{I,J-\e_q}=a_{I-\e_p+\e_q,J-\e_p}
$$
for all $|J|=k+1$. Here we used the convention that $a_{I,J-\e_q}=0$ if $J\not\geq \e_q$.

Since the above conclusion holds for all $I,J,p,q$ satisfying $I\geq\e_p$, it follows that for all $I,I',p,q$ such that $|I|=|I'|=k$ and $I\geq\e_p$,
\begin{equation}\label{eq3}
a_{I,I'}=a_{I-\e_p+\e_q,I'-\e_p+\e_q}.
\end{equation}

{\bf Step 2: Considering $(k-1)$-th order partial derivatives. }
Let $K\in\bb{N}^{n+1}$ be such that $|K|=k-1$, then the Euler formula for $D_Kg$ gives
\begin{eqnarray}\label{eq4}
(d-k+1)D_Kg=\sum_{p=0}^nx_pD_{K+\e_p}g.
\end{eqnarray}
Substituting \eqref{eq2} into \eqref{eq4}, we have
$$
(d-k+1)D_Kg=\sum_{p=0}^n\sum_{|I'|=k}x_pa_{K+\e_p,I'}D_{I'}f.
$$
By \eqref{eq3}, we deduce first of all that $a_{K+\e_p,I'}=0$ if $I'\not\geq \e_p$, and thus
$$
(d-k+1)D_Kg=\sum_{p=0}^n\sum_{I'\geq\e_p}x_pa_{K+\e_p,I'}D_{I'}f=\sum_{p=0}^n\sum_{I'\geq\e_p}x_pa_{K+\e_p,(I'-\e_p)+\e_p}D_{I'}f,
$$
or written in a more convenient way,
\begin{eqnarray*}
(d-k+1)D_Kg&=&\sum_{p=0}^n\sum_{|K'|=k-1}x_pa_{K+\e_p,K'+\e_p}D_{K'+e_p}f\\
           &=&\sum_{|K'|=k-1}\biggl(\sum_{p=0}^nx_pa_{K+\e_p,K'+\e_p}D_{K'+e_p}f\biggr).
\end{eqnarray*}
Now the relations \eqref{eq3} imply that $a_{K+\e_p,K'+\e_p}=a_{K+\e_q,K'+\e_q}$ for any $p,q=0,\cdots, n$; therefore, we obtain
$$
(d-k+1)D_Kg=\sum_{|K'|=k-1}a_{K+\e_0,K'+\e_0}\biggl(\sum_{p=0}^nx_pD_{K'+e_p}f\biggr).
$$
By the Euler formula for $D_{K'}f$, we have that
$$
\sum_{p=0}^nx_pD_{K'+e_p}f=(d-k+1)D_{K'}f,
$$
 so
$$
D_Kg=\sum_{|K'|=k-1}a_{K+\e_0,K'+\e_0}D_{K'}f.
$$
Since this holds for all $K$ satisfying $|K|=k-1$, it follows that $E_{k-1}(g)\subseteq E_{k-1}(f)$.
\end{proof}

\subsection{Linear independence of partial derivatives}
As a final step to our proof of Theorem \ref{thm2}, we need the following lemma, which is interesting in its own right; see also \cite{DGI}, Proposition 3.4.

\begin{lem}\label{lem1}
Given $n\geq 1$ and $d\geq 3$, suppose $0\leq k\leq\frac{d}{2}$. Then for a generic $f\in S_{n,d}$, we have
$$
\dim E_k(f)=\dim S_{n,k}.
$$
\end{lem}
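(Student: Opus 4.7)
The plan is to reformulate the statement as generic injectivity of a linear map and then run an incidence-dimension count. Identifying each $P=\sum_{|I|=k}a_Ix^I\in S_{n,k}$ with the constant-coefficient differential operator $P(\p)=\sum_{|I|=k}a_ID_I$, consider the evaluation map
$$
\Phi_f\colon S_{n,k}\longrightarrow S_{n,d-k},\qquad P\longmapsto P(\p)f.
$$
Its image is exactly $E_k(f)$ and its kernel is $\mathrm{Ann}_k(f):=\{P\in S_{n,k}:P(\p)f=0\}$, so by rank-nullity $\dim E_k(f)=\dim S_{n,k}-\dim\mathrm{Ann}_k(f)$. The conclusion therefore becomes: for generic $f$, $\mathrm{Ann}_k(f)=0$.

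The key preliminary step is a surjectivity lemma: for every nonzero $P\in S_{n,k}$, the operator $P(\p)\colon S_{n,d}\to S_{n,d-k}$ is surjective. I would prove this using the nondegenerate apolar pairing $\langle u,v\rangle_m:=u(\p)v|_{0}$ on $S_{n,m}$, characterised by $\langle x^I,x^J\rangle_m=I!\,\delta_{I,J}$, together with the adjointness identity $\langle P(\p)f,g\rangle_{d-k}=\langle f,Pg\rangle_d$ (verified by a direct monomial computation). Since $S$ is an integral domain, multiplication by the nonzero $P$ is injective on $S_{n,d-k}$, so its adjoint $P(\p)$ is surjective. Consequently $V_P:=\ker P(\p)$ is a linear subspace of codimension exactly $\dim S_{n,d-k}$.

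I would then form the incidence variety
$$
\mathcal{V}=\{([P],f)\in\bb{P}(S_{n,k})\times S_{n,d}\,:\,P(\p)f=0\},
$$
whose first projection has every fiber of dimension $\dim S_{n,d}-\dim S_{n,d-k}$, hence
$$
\dim\mathcal{V}=(\dim S_{n,k}-1)+\dim S_{n,d}-\dim S_{n,d-k}.
$$
The second projection is proper because $\bb{P}(S_{n,k})$ is projective, so its image $W=\bigcup_{[P]}V_P=\{f:\mathrm{Ann}_k(f)\neq 0\}$ is closed in $S_{n,d}$. Under the hypothesis $k\leq d/2$, the monotonicity of $m\mapsto\binom{n+m}{n}$ gives $\dim S_{n,k}\leq\dim S_{n,d-k}$, so $\dim W\leq\dim S_{n,d}-1$; thus $S_{n,d}\setminus W$ is a nonempty Zariski open set on which $\dim E_k(f)=\dim S_{n,k}$. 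The only genuinely nontrivial step in this plan is the surjectivity lemma; once the apolar adjointness identity is in hand, the rest is a routine fiber-dimension count.
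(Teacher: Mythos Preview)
Your argument is correct. Both your proof and the paper's hinge on the same reformulation: setting up the catalecticant/apolar map $\Phi_f\colon S_{n,k}\to S_{n,d-k}$, $P\mapsto P(\p)f$, and observing that $\dim E_k(f)=\dim S_{n,k}$ is equivalent to $\ker\Phi_f=0$. The difference is purely in how the generic injectivity of $\Phi_f$ is established. The paper simply invokes \cite[Proposition~3.4]{Ia} (or \cite{IE}) for this fact, whereas you supply a self-contained proof: the surjectivity lemma (via adjointness of $P(\p)$ and multiplication by $P$ under the apolar pairing) pins down the fiber dimension over each $[P]\in\bb{P}(S_{n,k})$, and the incidence-variety count then shows the bad locus $W$ is a proper closed subset once $\dim S_{n,k}\leq\dim S_{n,d-k}$. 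So the approaches coincide at the conceptual level; what your version buys is independence from the cited reference, at the cost of writing out the standard dimension count that the reference packages.
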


\begin{proof}
In fact, this result has already been proved in \cite{IE} as well as in \cite{Ia}. For completeness, we give a detailed proof in our setting.
Suppose given a linear relation
\begin{equation}\label{eq1}
\sum_{|I|=k}a_ID_If=0.
\end{equation}

Let $R=\bb{C}[y_0,\cdots,y_n]$ be the polynomial ring in variables $y_0,\cdots, y_n$. The apolar action of $R$ on $S$ is given by
$$
R\times S\to S, (F,f)\mapsto\langle F,f\rangle=F\bigl(\frac{\p}{\p x_0},\cdots,\frac{\p }{\p x_n}\bigr)f(x_0,\cdots, x_n);
$$
see \cite{Ia}.
Denote
$$
P(y_1,\cdots,y_n)=\sum_{|I|=k}a_I y^I,
$$
then the relation \eqref{eq1} can be simply written as $\langle P,f\rangle=0$.

From \cite[Proposition 3.4]{Ia}, it follows that for a generic $f$, the map $R_k \to S_{d-k}, Q\mapsto\langle Q,f\rangle$ is injective since $k\leq d/2$ implies that $\dim S_{d-k}\geq\dim R_k$. Hence from relation \eqref{eq1}, we obtain $P=0$ when $f$ is generically chosen, i.e., $a_I=0$ for all $I$ with $|I|=k$. We are done.
\end{proof}

\begin{rk}
In view of the obvious bound for $\dim E_k(f)$ given by
$$
\dim E_k(f)\leq\min\{\dim S_{n,k},\dim S_{n,d-k}\},
$$
the condition on $k$ in Lemma \ref{lem1} is optimal.
\end{rk}

\subsection{Proof of Theorem \ref{thm2}}
Let $f$ be a generic polynomial in $S_{n,d}$ and $E_k(g)=E_k(f)$. Under the assumption $k\leq\frac{d}{2}-1$, it follows that $k+1\leq\frac{d}{2}$, hence, by Lemma \ref{lem1}, we have $\dim E_{k+1}(f)=\dim S_{n,k+1}$; therefore the requirements in Proposition \ref{prop1} are satisfied. By Proposition \ref{prop1}, it follows that $E_{k-1}(g)=E_{k-1}(f)$. Note that by Corollary \ref{cor1}, we have $\dim E_k(f)=\dim S_{n,k}$, so the requirements in Proposition \ref{prop1} are satisfied with $k$ replaced by $k-1$ and we obtain $E_{k-2}(g)=E_{k-2}(f)$. These arguments can be repeated until we obtain $E_0(g)=E_0(f)$. By definition, we have $E_0(g)=\bb{C}g$ and $E_0(f)=\bb{C}f$, therefore $g$ is a constant multiple of $f$.

\section{Applications}

As pointed out in the introduction, the most remarkable application of the results in this paper lies in the study of higher order analogue of variation of Hodge structures for hypersurfaces; see \cite{DGI}. In this section, we give some other applications in the study of deformations of homogeneous polynomials.

For $k\geq 0$, denote by $\mathcal{U}_{n,d}(k)$ the following set
$$
\mathcal{U}_{n,d}(k)=\{\,f\in S_{n,d}\,:\,\dim E_k(f)=\dim S_{n,k}\}.
$$
From semi-continuity of $\dim E_k(f)$ with respect to $f$, we see that $\mathcal{U}_{n,d}(k)$ is a Zariski open subset of $S_{n,d}$. Obviously, we have $\mathcal{U}_{n,d}(k)=\emptyset$ if $k>\frac{d}{2}$. From Lemma \ref{lem1}, we have the following result.

\begin{cor}
Given $n\geq 1$ and $d\geq 3$. For $k\leq\frac{d}{2}$, the set $\mathcal{U}_{n,d}(k)$ is a Zariski open dense subset of $S_{n,d}$.
\end{cor}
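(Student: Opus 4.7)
The plan is to decompose the claim into two pieces: openness of $\mathcal{U}_{n,d}(k)$ in the Zariski topology on $S_{n,d}$, and nonemptiness. Since $S_{n,d}$ is a finite-dimensional complex vector space, it is irreducible as an algebraic variety, so any nonempty Zariski open subset is automatically dense; hence these two ingredients together suffice.

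For openness, I would view the assignment $f\mapsto (D_If)_{|I|=k}$ as a morphism of trivial vector bundles over $S_{n,d}$, sending $f$ to the linear map $R_k\to S_{n,d-k}$ defined on monomials by $y^I\mapsto D_If$, where $R_k$ denotes the span of $y^I$ with $|I|=k$. The matrix entries of this linear map are polynomial (indeed linear) in the coefficients of $f$, so its rank is lower semi-continuous in $f$. Combined with the trivial upper bound $\dim E_k(f)\leq\dim R_k=\dim S_{n,k}$, this shows that the locus where the rank attains its maximal possible value is Zariski open, which is exactly $\mathcal{U}_{n,d}(k)$.

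For nonemptiness under the hypothesis $k\leq d/2$, Lemma \ref{lem1} supplies exactly what is needed: it asserts that $\dim E_k(f)=\dim S_{n,k}$ holds for generic $f\in S_{n,d}$, so in particular $\mathcal{U}_{n,d}(k)$ contains at least one point. Strictly speaking there is no real obstacle here, because Lemma \ref{lem1} is where all the substantive work has already been done via the apolar pairing; the corollary is essentially a repackaging of that lemma together with standard semi-continuity of rank and the irreducibility of affine space.
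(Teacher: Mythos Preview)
Your proof is correct and matches the paper's own argument, which simply cites semi-continuity of $\dim E_k(f)$ for openness and Lemma~\ref{lem1} for nonemptiness; your version merely spells out the details (irreducibility of $S_{n,d}$, the explicit matrix interpretation) that the paper leaves implicit.
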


In addition, for any $f\in\mathcal{U}_{n,d}(k)$, we have by definition that $\dim E_k(f)=\dim S_{n,k}$; by Lemma \ref{lem2}, we deduce that $\dim E_{k-1}(f)=\dim S_{n,k-1}$, that is $f\in\mathcal{U}_{n,d}(k-1)$. In other words, for fixed $n$ and $d$, the sequence of sets $\{\mathcal{U}_{n,d}(k)\}$ satisfies the following relations
$$
\mathcal{U}_{n,d}(0)\supseteq\mathcal{U}_{n,d}(1)\supseteq\cdots\supseteq\mathcal{U}_{n,d}(k)\supseteq\mathcal{U}_{n,d}(k+1)\supseteq\cdots.
$$

Note that $\mathcal{U}_{n,d}(k)$ is a cone in $S_{n,d}$, hence we can consider its projectivization, denoted by $\bb{P}(\mathcal{U}_{n,d}(k))$, in $\bb{P}(S_{n,d})$. Similar to the construction in \cite{ZW}, the assignment
$$
[f]\mapsto\bb{P}(E_k(f))
$$
gives a well-defined map, denoted by $\varphi_k$, from $\bb{P}(\mathcal{U}_{n,d}(k))$ to an obvious Grassmannian for $k\leq\frac{d}{2}$.

Using Proposition \ref{prop1} and Lemma \ref{lem2}, we prove the following result which gives an extension of Corollary 7.7 in \cite{ZW}.

\begin{cor}
For $k\leq\frac{d}{2}-1$, the map $\varphi_k: \bb{P}(\mathcal{U}_{n,d}(k))\ni[f]\mapsto\bb{P}(E_k(f))$ is injective when restricted to $\bb{P}(\mathcal{U}_{n,d}(k+1))$. In particular, it is generically injective.
\end{cor}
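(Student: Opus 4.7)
The plan is to mimic the proof of Theorem~\ref{thm2}, but with the genericity hypothesis replaced by the explicit membership $f\in\mathcal{U}_{n,d}(k+1)$. Suppose $[f],[g]\in\bb{P}(\mathcal{U}_{n,d}(k+1))$ satisfy $\varphi_k([f])=\varphi_k([g])$; this means $\bb{P}(E_k(f))=\bb{P}(E_k(g))$, or equivalently, as linear subspaces, $E_k(f)=E_k(g)$. The goal is to show that $g\in\bb{C}^*f$, so that $[f]=[g]$.

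First I would invoke the hypothesis that $f\in\mathcal{U}_{n,d}(k+1)$, which by definition gives $\dim E_{k+1}(f)=\dim S_{n,k+1}$. Combined with $E_k(g)=E_k(f)$, this is exactly what Proposition~\ref{prop1} requires, and applying it yields $E_{k-1}(g)=E_{k-1}(f)$. To iterate, I need $\dim E_k(f)=\dim S_{n,k}$ (the hypothesis of Proposition~\ref{prop1} with $k$ replaced by $k-1$); this follows from the inclusion chain $\mathcal{U}_{n,d}(k+1)\subseteq\mathcal{U}_{n,d}(k)$ established just before the corollary (or directly from Lemma~\ref{lem2}). So Proposition~\ref{prop1} applies again and gives $E_{k-2}(g)=E_{k-2}(f)$.

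Continuing downward in exactly the same fashion, at each stage the containment $\mathcal{U}_{n,d}(k+1)\subseteq\mathcal{U}_{n,d}(r)$ for every $r\leq k+1$ ensures that $\dim E_r(f)=\dim S_{n,r}$, which is what is needed to apply Proposition~\ref{prop1} one more step down. After finitely many steps we reach $E_0(g)=E_0(f)$, i.e.\ $\bb{C}g=\bb{C}f$, and therefore $g\in\bb{C}^*f$. This proves injectivity of $\varphi_k$ on $\bb{P}(\mathcal{U}_{n,d}(k+1))$. The final assertion that $\varphi_k$ is generically injective follows because, under the hypothesis $k\leq\frac{d}{2}-1$, Lemma~\ref{lem1} guarantees that $\mathcal{U}_{n,d}(k+1)$ is a nonempty Zariski open subset of $S_{n,d}$, hence its projectivization is Zariski dense in $\bb{P}(\mathcal{U}_{n,d}(k))$.

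There is no real obstacle; the proof is essentially a bookkeeping exercise showing that the descending induction in the proof of Theorem~\ref{thm2} never actually used genericity beyond the conclusion $\dim E_r(f)=\dim S_{n,r}$ for $r\leq k+1$, which is precisely the content of belonging to $\mathcal{U}_{n,d}(k+1)$. The only mildly delicate point is remembering to verify that the dimension hypothesis of Proposition~\ref{prop1} persists at every step of the induction, which is handled uniformly by the inclusion $\mathcal{U}_{n,d}(k+1)\subseteq\mathcal{U}_{n,d}(r)$ for all $r\leq k$.
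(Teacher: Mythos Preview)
Your proposal is correct and follows essentially the same route as the paper's own proof: both start from $E_k(f)=E_k(g)$ with $f\in\mathcal{U}_{n,d}(k+1)$, apply Proposition~\ref{prop1} to descend one step, and then iterate using the fact (Lemma~\ref{lem2} or the inclusion chain) that the dimension hypothesis persists at each lower level, ending at $E_0(f)=E_0(g)$. Your added remark that generic injectivity follows from Lemma~\ref{lem1} making $\mathcal{U}_{n,d}(k+1)$ Zariski open dense is exactly the intended justification for the final clause.
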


\begin{proof}
To begin the proof, suppose $[f]$ and $[g]$ are two elements of $\bb{P}(\mathcal{U}_{n,d}(k+1))$ such that $\varphi_k([f])=\varphi_k([g])$. By the definition of $\varphi_k$, this means that $E_k(f)=E_k(g)$. Now the assumption $[f]\in\bb{P}(\mathcal{U}_{n,d}(k+1))$ implies that $\dim E_{k+1}(f)=\dim S_{n,k+1}$, hence by Proposition \ref{prop1}, we obtain $E_{k-1}(f)=E_{k-1}(g)$. An induction argument on $k$ gives $[f]=[g]$, which goes exactly the same as the proof of Theorem \ref{thm2} where only the properties $\dim E_{k+1}(f)=\dim S_{n,k+1}$ and $E_k(f)=E_k(g)$ are essentially used. Thus, $\varphi_k$ is injective on $\bb{P}(\mathcal{U}_{n,d}(k+1))$.
\end{proof}

\begin{rk}
We do not know whether $\varphi_k$ is injective on $\bb{P}(\mathcal{U}_{n,d}(k))$ or not, except the case $k=\frac{d}{2}$ where $\varphi_{\frac{d}{2}}$ is a constant map, because in this case $E_k(f)=S_{n,k}$ for any $f\in\mathcal{U}_{n,d}(k)$.
\end{rk}


\begin{thebibliography}{00}




	
		\bibitem{CG} J.~Carlson, P.~Griffiths, \emph{Infinitesimal variations of Hodge structure and the global Torelli problem}, in Journ\'{e}es de g\'{e}ometrie alg\'{e}brique d'Angers, edited by A. Beauville, p. 51--76, Sijthoff and Noordhoff (1980).	

       \bibitem{DGI} A.~Dimca, R.~Gondim, and G.~Ilardi, \emph{Higher order Jacobians, Hessians and Milnor algebras}, Collect.
Math. (2019). https://doi.org/10.1007/s13348-019-00266-1
	
       \bibitem{Ia} A.~Iarrobino, \emph{Compressed Algebras: Artin Algebras Having Given Socle Degrees and Maximal Length}, Trans. Amer. Math. Soc., {\bf 285} (1984), 337--378.

       \bibitem{IE} A.~Iarrobino and J.~Emsalem, \emph{Some Zero-dimensional generic singularities: finite algebras having small tangent space}, Compositio Math. {\bf 36} (1978), 145--188.



	   \bibitem{ZW} Zhenjian Wang, \emph{On homogeneous polynomials determined by their Jacobian ideal}, Manuscripta Math. {\bf 146}(2015), 559-574.
	
	
	
	
	
	
	
\end{thebibliography}
\end{document}